\newtheorem{theorem}{Theorem}[section]
\newtheorem{proposition}[theorem]{Proposition}
\newtheorem{remark}[theorem]{Remark}
\begin{document}

\title{Hydrodynamic--type systems describing $2$-dimensional polynomially integrable geodesic flows}
%
%
 \author{Gianni Manno\footnote{Dipartimento di Scienze Matematiche ``G. Lagrange'', Politecnico di Torino, Corso Duca degli Abruzzi, 24, 10129 Torino, ITALY. giovanni.manno@polito.it}, Maxim V. Pavlov\footnote{
  Sector of Mathematical Physics, Lebedev Physical Institute of Russian Academy of Sciences,
   Leninskij Prospekt 53, 119991 Moscow, Russia, 
  Department of Applied Mathematics,National Research Nuclear University MEPHI,Kashirskoe Shosse 31, 115409 Moscow, Russia,
  Department of Mechanics and Mathematics,Novosibirsk State University, 2 Pirogova street, 630090, Novosibirsk, Russia, mpavlov@itp.ac.ru}}

\maketitle

\begin{abstract}
Starting from a homogeneous polynomial in momenta of arbitrary order we extract multi-component
hydrodynamic-type systems which describe $2$-dimensional geodesic flows admitting the initial polynomial as integral. All these
hydrodynamic-type systems are semi-Hamiltonian, thus implying that they are
integrable according to the generalized hodograph method. Moreover, they are
integrable in a constructive sense as polynomial first integrals allow to
construct generating equations of conservation laws. According to the multiplicity of the roots of the polynomial integral, we separate integrable particular cases.
\end{abstract}

\textbf{Keyword:}
Integrable geodesic flows, semi-Hamiltonian hydrodynamic systems


\textbf{MSC 2010:} 53D25, 37J35, 37K05, 37D40, 70H05



\section*{Introduction}

This paper is devoted to the following classical problem: how to extract
geodesic flows that are integrable according to the Liouville theorem. As
usual, the geodesic flow of an $n$-dimensional (pseudo-)Riemannian manifold
$(M,g)$ is locally described by a system of (generally nonlinear) ODEs
\begin{equation}
\label{eq.geod}\ddot{x}^{i}+\Gamma_{jk}^{i}\dot{x}^{j}\dot{x}^{k}=0\,, \quad
i=1,\dots,n\,,
\end{equation}
where $\mathbf{x}=(x^{i})$ is a system of coordinates of $M$, $\Gamma_{jk}^{i}(\mathbf{x})$ are the Christoffel symbols of the Levi-Civita connection
and $\dot{\mathbf{x}}$, $\ddot{\mathbf{x}}$ are, respectively, the first and
second derivatives of $\mathbf{x}$ w.r.t. an external parameter $t$. We denote
by $(x^{i},p_{i})$ the system of coordinates of $T^{*}M$ induced by
coordinates $(x^{i})$. It is well known that system \eqref{eq.geod} can be
written in Hamiltonian form
\begin{equation*}
\dot{x}^{i}=\frac{\partial H}{\partial p_{i}},\text{ \ }\dot{p}_{i}
=-\frac{\partial H}{\partial x^{i}},\ \ i=1,\ldots,n,\label{1}
\end{equation*}
where
\begin{equation}\label{eq.hamil}
H(\mathbf{x},\mathbf{p})=\frac{1}{2}g^{km}(\mathbf{x})p_{k}p_{m}
\end{equation}
and $\mathbf{p}=(p_{i})$. Since the Hamiltonian function $H(\mathbf{x},\mathbf{p})$ does not depend on $t$ explicitly, Hamilton's equations are
Liouville integrable if there exist $n-1$ first integrals $f_{k}(\mathbf{x},\mathbf{p})$ in involution, i.e. $\{f_{k},H\}=0$ and $\{f_{i},f_{k}\}=0$, where
\[
\{f,g\}=\frac{\partial f}{\partial x^{i}}\frac{\partial g}{\partial p_{i}}-\frac{\partial f}{\partial p_{i}}\frac{\partial g}{\partial x^{i}}
\]
is the usual Poisson bracket.

In the present paper we shall consider only the case $n=2$. In such a case a first
integral $f(x^{1},x^{2},p_{1},p_{2})$ satisfies the equation
\begin{equation}\label{eq.fH}
\{f,H\}=\frac{\partial f}{\partial x^{1}}\frac{\partial
H}{\partial p_{1}}-\frac{\partial f}{\partial p_{1}}\frac{\partial H}{\partial
x^{1}}+\frac{\partial f}{\partial x^{2}}\frac{\partial H}{\partial p_{2}}-\frac{\partial f}{\partial p_{2}}\frac{\partial H}{\partial x^{2}}=0.
\end{equation}

A central idea is the research of first integrals $f(x^{1},x^{2},p_{1},p_{2})$ that are homogeneous polynomials of degree $N$ in momenta $p_{i}$, i.e. of the form
\begin{equation}\label{eq.f}
f=\overset{N}{\sum_{m=0}}a_{m}(x^{1},x^{2})p_{1}^{N-m}p_{2}^{m}\,.
\end{equation}
By substituting the polynomial
ansatz \eqref{eq.f} into \eqref{eq.fH} one can derive a quasi-linear system of
first order PDEs (see a similar approach, for instance, in \cite{Bialy.Mironov}).

The problem of describing $2$-dimensional metrics admitting a polynomial integral is classical: its formulation is due at least to Darboux \cite{Da} and
it has been solved (both locally and
globally) in the case when $N$ is equal either to $1$ or $2$. For $N=2$ see
\cite{Krug}, where metrics admitting quadratic integrals are called of
\emph{Liouville} type: in that paper the author solved the problem posed in
\cite{vagner} of their local characterization (see also \cite{BMM} for the
relationship with superintegrable metrics -in the sense of \cite{KKM}- and
projectively equivalent metrics). For $N=3$ see \cite{inv,super}, where the
problem of finding such metrics is solved under particular assumptions.

A possible strategy (see, for instance, \cite{Bialy}) is to fix a coordinate system where the metric assumes some
special form, for instance: $ds^{2}=a(x^{1},x^{2})(dx^{1})^{2}+(dx^{2})^{2}$ (semi-geodesic coordinates),  $ds^{2}=a(x^{1},x^{2})[(dx^{1})^{2}+(dx^{2})^{2}]$ (isothermal coordinates),
$ds^{2}=(dx^{1})^{2}+a(x^{1},x^{2})dx^{1}dx^{2}+(dx^{2})^{2}$ (Chebyshev coordinates), etc. In all these cases condition \eqref{eq.fH}, with $f$ given by \eqref{eq.f}, leads to hydrodynamic-type systems of PDEs on the coefficients
$a_{m}(x^{1},x^{2})$. Integrability of such
hydrodynamic-type systems is a separate question.

In this paper we present an alternative construction. We suppose that
polynomial \eqref{eq.f} has $N$ real roots (not necessarily distinct), so that
\eqref{eq.f} can be written in the following factorized form
\begin{equation}\label{eq.pol.iniziale}
f=\underset{m=1}{\overset{N}{{\displaystyle\prod} }}(\alpha_{m}^{1}p_{1}+\alpha_{m}^{2}p_{2})\, .
\end{equation}
By finding a suitable system of coordinates where both \eqref{eq.pol.iniziale} and Hamiltonian \eqref{eq.hamil} assume a particular convenient form, by means of condition \eqref{eq.fH}, we arrive to discuss a first-order quasi-linear system of PDEs in the unknown functions $\alpha_{m}^{1}(x^{1},x^{2}),\alpha_{m}^{2}(x^{1},x^{2})$ and $g_{ik}(x^{1},x^{2})$.
Our main achievement is a more general ansatz for polynomial
integrals than that presented in \cite{Bialy}: the semi-Hamiltonian
hydrodynamic-type system we obtained contains $N+1$ equations, while the hydrodynamic-type
system considered in \cite{Bialy} contains just $N$ equations. Furthermore, the system we obtained possesses a simple reduction (i.e. the metric is a linear
expression in terms of field variables) to the case considered in
\cite{Bialy}. In particular, one can select first order quasi-linear systems according to the multiplicity of roots of polynomial \eqref{eq.pol.iniziale}.

\subsection*{Notations and conventions}

$(M,g)$ will be always a $2$-dimensional (pseudo-)Riemannian manifold. The
symmetric tensor product is denoted by $\odot$.
We shall denote by $f_x$ the derivative of $f$ w.r.t. $x$. All results presented in the
paper are of local character, meaning that we always work in suitable neighborhoods.

\section{Polynomial integrals of the geodesic flow}\label{sec.1}

For our purposes we need the following proposition, that is well-known.
\begin{proposition}\label{prop.p1.3}
If the geodesic flow of $(M,g)$ admits a polynomial first integral $f$ of the following form
\begin{equation*}
f=\mp(\alpha^1p_1+\alpha^2p_2)^N
\end{equation*}
then it admits also the first linear integral $\alpha^1p_1+\alpha^2p_2$. In particular, $(M,g)$ admits a Killing vector field.
\end{proposition}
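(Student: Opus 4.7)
My plan is to apply the Leibniz rule for the Poisson bracket to reduce the hypothesis $\{f,H\}=0$ to a factored identity, and then exploit the fact that the polynomial ring $C^\infty(M)[p_1,p_2]$ is an integral domain.

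Set $L := \alpha^1 p_1 + \alpha^2 p_2$ so that $f = \mp L^N$. By the Leibniz rule for $\{\cdot,H\}$,
\[
0 = \{f,H\} = \mp N\,L^{N-1}\{L,H\}.
\]
Here $L^{N-1}$ is a homogeneous polynomial of degree $N-1$ in $(p_1,p_2)$ whose coefficients are (up to binomial factors) the monomials $(\alpha^1)^{N-1-k}(\alpha^2)^k$, while $\{L,H\}$ is a homogeneous polynomial of degree $2$ in $(p_1,p_2)$ whose coefficients are smooth functions on $M$. Since the polynomial ring in $p_1,p_2$ over the integral domain of germs of smooth functions is itself an integral domain, at every point $\mathbf{x}\in M$ where $(\alpha^1(\mathbf{x}),\alpha^2(\mathbf{x}))\neq(0,0)$ the polynomial $L^{N-1}$ is a nonzero element; hence $\{L,H\}$ must vanish identically in $\mathbf{p}$ at such points. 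By density/continuity (or by simply excluding the degenerate case $L\equiv 0$, where the statement is vacuous), $\{L,H\}=0$ on all of $T^*M$, so $L$ is itself a first integral.

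For the last claim, I would recall the classical correspondence between first integrals of the geodesic flow that are linear in momenta and Killing vector fields of $g$: writing $L = \alpha^i p_i$, the equation $\{L,H\}=0$ with $H=\tfrac12 g^{km}p_kp_m$ is equivalent, after collecting the coefficients of the quadratic monomials in $(p_1,p_2)$, to the Killing equation $\nabla_{(i}\alpha_{j)}=0$ for the vector field $X=\alpha^i\partial_{x^i}$ (with $\alpha_i := g_{ij}\alpha^j$). Thus $X$ is a Killing vector field on $(M,g)$.

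There is no real obstacle here; the only mildly subtle point is justifying the cancellation of the factor $L^{N-1}$, which is handled by the integral-domain argument above rather than by any pointwise division. Once this is clear, the rest is the standard identification of linear momentum integrals with infinitesimal isometries.
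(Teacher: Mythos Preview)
The paper does not actually prove this proposition; it simply records it as ``well-known'' and moves on. Your argument via the Leibniz rule $\{L^N,H\}=NL^{N-1}\{L,H\}$ followed by cancellation is the standard one and is correct in substance.

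One small quibble: the ring of germs of $C^\infty$ functions at a point is \emph{not} an integral domain (flat functions supply zero-divisors), so the phrase ``the integral domain of germs of smooth functions'' is technically wrong. Fortunately you do not need it. What you actually use is the pointwise version: fix $\mathbf{x}$ with $(\alpha^1(\mathbf{x}),\alpha^2(\mathbf{x}))\neq(0,0)$ and read $L^{N-1}\{L,H\}=0$ as an identity in $\mathbb{R}[p_1,p_2]$, which \emph{is} an integral domain; this forces $\{L,H\}|_{\mathbf{x}}=0$ in $\mathbf{p}$, exactly as you then say. Your continuity remark extends this to the closure of that open set, and on the interior of its complement $\alpha^1,\alpha^2$ vanish identically, so $\{L,H\}=0$ there trivially. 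With the justification phrased this way the proof is complete.
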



%
%
Essentially, the case described in Proposition \ref{prop.p1.3} is the most
simple one that can occur, i.e. when the metric admits a local infinitesimal
isometry. Thus, in what follows, we shall focus our attention to the case when the geodesic flow of $(M,g)$ admits a first integral $f$ with at least two distinct roots. We shall use the results of next Proposition to get a description of a such
geodesic flow in terms of a hydrodynamic-type system.
\begin{proposition}\label{prop.normal.forms.pol}
Let $f$ be as in \eqref{eq.pol.iniziale}. Let us
suppose that $f$ admits at least two distinct roots, i.e.
\begin{equation}\label{eq.appoggio.2}
f=(\alpha^1_1p_1+\alpha^2_1p_2)(\alpha^1_2p_1+\alpha^2_2p_2)Q
\end{equation}
where $Q$ is a homogeneous polynomial of degree $N-2$ in momenta, $\alpha^1_1(p)\alpha^2_2(p)-\alpha^1_2(p)\alpha^2_1(p)\neq 0$, $(\alpha^1_1(p),\alpha^2_1(p))\neq 0\neq(\alpha^1_2(p),\alpha^2_2(p))$ at a point $p\in M$. Then, in a neighborhood $U$ of the point $p$, there
exists a system of coordinates $(x_{\mathrm{new}}^{1},x_{\mathrm{new}}^{2})$
such that polynomial $f$, in the induced system of coordinates
$(x_{\mathrm{new}}^{1},x_{\mathrm{new}}^{2},{p_{1}}_{{\mathrm{new}}},{p_{2}}_{{\mathrm{new}}})$ of $T^{\ast}M$, assumes the form
\begin{equation}
f=p_{1}p_{2}\prod_{i=1}^{N-2}(\alpha_{i}^{1}p_{1}+\alpha_{i}^{2}p_{2})\,,\quad\alpha_{j}^{k}=\alpha_{j}^{k}(x^{1},x^{2})
\label{eq.pol.2.distinct.roots.general}
\end{equation}
\end{proposition}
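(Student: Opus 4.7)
The plan is to interpret the two distinguished linear-in-momenta factors of $f$ as the fibrewise-linear functions on $T^{\ast}M$ associated to two vector fields on $M$, and then straighten these vector fields by a change of base coordinates. Set
\begin{equation*}
X_k := \alpha_k^{1}\partial_{x^1}+\alpha_k^{2}\partial_{x^2},\qquad k=1,2;
\end{equation*}
then $\alpha_k^{1}p_1+\alpha_k^{2}p_2$ is exactly the fibrewise-linear function on $T^{\ast}M$ induced by $X_k$. The hypothesis $\alpha_1^{1}\alpha_2^{2}-\alpha_1^{2}\alpha_2^{1}\neq 0$ at $p$ says that $X_1,X_2$ are linearly independent there, hence form a local frame on a neighbourhood $U$ of $p$.

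Next, using the classical rectification theorem for a single nonvanishing planar vector field, I would produce smooth functions $\phi,\psi$ on a (possibly shrunken) $U$ with $X_1(\phi)=0$, $X_2(\psi)=0$, and $d\phi,d\psi$ nowhere zero, and then declare
\begin{equation*}
x^1_{\mathrm{new}}:=\psi,\qquad x^2_{\mathrm{new}}:=\phi.
\end{equation*}
The annihilator of $X_k$ is spanned by $\omega_k:=\alpha_k^{2}dx^1-\alpha_k^{1}dx^2$, so $d\phi$ and $d\psi$ are nonzero multiples of $\omega_1$ and $\omega_2$ respectively. A direct computation gives $\omega_1\wedge\omega_2=(\alpha_1^{1}\alpha_2^{2}-\alpha_1^{2}\alpha_2^{1})\,dx^1\wedge dx^2\neq 0$, so $(x^1_{\mathrm{new}},x^2_{\mathrm{new}})$ is indeed a local coordinate system on $U$.

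By construction $X_1(x^2_{\mathrm{new}})=0$, while $X_1(x^1_{\mathrm{new}})=d\psi(X_1)$ is a nonzero multiple of the same Wronskian and hence nonvanishing; therefore $X_1=\lambda_1\partial_{x^1_{\mathrm{new}}}$ for some smooth $\lambda_1\neq 0$, and similarly $X_2=\lambda_2\partial_{x^2_{\mathrm{new}}}$. Under the induced symplectic coordinate change on $T^{\ast}M$, the fibrewise-linear function on $T^{\ast}M$ associated to a vector field transforms covariantly with that vector field, so
\begin{equation*}
\alpha_k^{1}p_1+\alpha_k^{2}p_2 \;=\; \lambda_k\,{p_k}_{\mathrm{new}},\qquad k=1,2.
\end{equation*}
Substituting into \eqref{eq.appoggio.2} yields $f={p_1}_{\mathrm{new}}{p_2}_{\mathrm{new}}\cdot\bigl(\lambda_1\lambda_2\widetilde{Q}\bigr)$, where $\widetilde{Q}$ is $Q$ expressed in the new coordinates; refactorising the homogeneous degree-$(N-2)$ polynomial $\lambda_1\lambda_2\widetilde{Q}$ over the reals as $\prod_{i=1}^{N-2}(\alpha_i^{1}{p_1}_{\mathrm{new}}+\alpha_i^{2}{p_2}_{\mathrm{new}})$ produces \eqref{eq.pol.2.distinct.roots.general}.

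There is no truly hard step. The one delicate point is not to try to straighten $X_1$ and $X_2$ \emph{simultaneously} to $\partial_{x^1_{\mathrm{new}}}$ and $\partial_{x^2_{\mathrm{new}}}$, which would demand $[X_1,X_2]=0$; instead one asks only that each $X_k$ be tangent to one coordinate line, which is local integrability of a one-dimensional distribution in the plane and is always available.
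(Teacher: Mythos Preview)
Your proof is correct and follows essentially the same route as the paper: identify the two linear factors with vector fields $X_1,X_2$ via $p_i\simeq\partial_{x^i}$, then take the integral curves of $X_1$ and $X_2$ as the new coordinate lines. You simply spell out in detail (first integrals $\phi,\psi$, the Wronskian check, the resulting rescalings $\lambda_k$) what the paper compresses into one sentence, and you are right to flag that only tangency to coordinate lines---not commutativity---is needed.
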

\begin{proof}
In view of the identification $p_i\simeq\partial_{x^i}$, we can write polynomial \eqref{eq.appoggio.2} as follows:
\begin{equation*}
f=X\odot Y\odot \Xi
\end{equation*}
where $X=\alpha^1_1\partial_{x^1}+\alpha^2_1\partial_{x^2}$, $Y=\alpha^1_2\partial_{x^1}+\alpha^2_2\partial_{x^2}$ and $\Xi$ is a $(N-2,0)$-tensor. The two distributions of curves, formed by the integral curves of $X$ and $Y$, can always be chosen as coordinate lines: in these coordinates the integral acquires a factor of $p_1p_2$, i.e. polynomial \eqref{eq.appoggio.2} assumes the form \eqref{eq.pol.2.distinct.roots.general}.
%
%
%
\end{proof}

\begin{remark}\label{rem.extra.freedom}
The form of any homogeneous polynomial in momenta
(in particular polynomial \eqref{eq.pol.2.distinct.roots.general}) does not
change under a transformation
\begin{equation}
x_{\mathrm{new}}^{1}=x_{\mathrm{new}}^{1}(x^{1})\,,\quad x_{\mathrm{new}}^{2}=x_{\mathrm{new}}^{2}(x^{2})\,.\label{eq.x.new}
\end{equation}
Indeed, both $\partial_{x^{1}}$ and $\partial_{x^{2}}$ do not change direction
under the above transformation.
\end{remark}

\begin{proposition}
\label{prop.ham.and.pol} If the geodesic flow of $(M,g)$ admits a
polynomial integral $f$ with the properties described in Proposition \ref{prop.normal.forms.pol}, then there exists a
system of coordinates $(x^{i},p_{i})$ where the Hamiltonian \eqref{eq.hamil}
has the form
\begin{equation}
\label{eq.H.normal.form}H=\frac{1}{2}\epsilon_{1}p_{1}^{2}+g^{12}p_{1}p_{2}+\frac{1}{2}\epsilon_{2}p_{2}^{2}\,,\quad g^{12}=g^{12}(x^{1},x^{2})\,,\quad\epsilon_{i}\in\{-1,0,1\}\,.
\end{equation}
and the polynomial $f$ the form \eqref{eq.pol.2.distinct.roots.general}.
\end{proposition}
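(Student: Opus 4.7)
The plan is to start in the coordinate system provided by Proposition \ref{prop.normal.forms.pol}, in which $f$ already has the factored form \eqref{eq.pol.2.distinct.roots.general}, and then to exploit the residual coordinate freedom \eqref{eq.x.new} of Remark \ref{rem.extra.freedom} to normalize the diagonal entries $g^{11},g^{22}$ of the inverse metric. The key intermediate claim is that, in the Proposition~\ref{prop.normal.forms.pol} coordinates, one already has $g^{11}=g^{11}(x^1)$ and $g^{22}=g^{22}(x^2)$.

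To obtain this key claim, I would factor $f = p_1^{a}\tilde f$ with $a\geq 1$ maximal, so that $\tilde f(x^1,x^2,0,p_2)$ is a nonzero polynomial in $p_2$ (legitimate in a sufficiently small neighborhood of the reference point by the hypotheses on the roots of \eqref{eq.pol.2.distinct.roots.general}). The relation $\{f,H\}=0$ expands by the Leibniz rule and, after dividing out $p_1^{a-1}$, becomes
\[
a\,\tilde f\,\{p_1,H\} + p_1\,\{\tilde f,H\} = 0\,.
\]
A direct computation from \eqref{eq.hamil} gives $\{p_1,H\}=-\tfrac{1}{2}\bigl(g^{11}_{x^1}p_1^2+2g^{12}_{x^1}p_1p_2+g^{22}_{x^1}p_2^2\bigr)$, so restricting the displayed identity to $p_1=0$ forces $\tilde f(x^1,x^2,0,p_2)\,g^{22}_{x^1}\,p_2^2=0$. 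Since the first factor is not identically zero, this yields $g^{22}_{x^1}=0$, and the symmetric argument at $p_2=0$ gives $g^{11}_{x^2}=0$.

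For the final normalization I would apply a coordinate change of type \eqref{eq.x.new}, $\tilde x^1=\phi(x^1)$, $\tilde x^2=\psi(x^2)$, under which $p_1=\phi'(x^1)\tilde p_1$ and $p_2=\psi'(x^2)\tilde p_2$ and hence $\tilde g^{11}=(\phi')^2 g^{11}(x^1)$, $\tilde g^{22}=(\psi')^2 g^{22}(x^2)$. Shrinking the neighborhood so that $g^{11}$ is either identically zero or of constant sign (and likewise for $g^{22}$), the ODE $\phi'(x^1)=1/\sqrt{|g^{11}(x^1)|}$ is locally solvable in the nonzero case and produces $\epsilon_1 = \operatorname{sign}(g^{11}) \in \{\pm 1\}$, while in the zero case we simply set $\phi=\mathrm{id}$ and $\epsilon_1=0$; analogously for $\epsilon_2$. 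By Remark \ref{rem.extra.freedom}, the factored form \eqref{eq.pol.2.distinct.roots.general} of $f$ is preserved throughout (the nonvanishing factors $\phi',\psi'$ being absorbed into the new coefficients $\alpha_i^k$).

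The delicate point is the leading-order extraction in the second paragraph: one must identify the correct multiplicity $a$ of $p_1$ as a factor of $f$ in order to produce a nontrivial constraint on $g^{22}$ after restriction to $\{p_1=0\}$. The factored structure \eqref{eq.pol.2.distinct.roots.general}, combined with the hypothesis of at least two distinct roots, is exactly what guarantees $\tilde f|_{p_1=0}\not\equiv 0$ and makes the argument go through; in its absence one is back in the situation of Proposition \ref{prop.p1.3}.
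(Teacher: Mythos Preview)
Your proof is correct and follows essentially the same route as the paper. The paper also starts from the coordinates of Proposition~\ref{prop.normal.forms.pol}, extracts the constraints $g^{11}_{x^2}=0=g^{22}_{x^1}$ from $\{f,H\}=0$ by looking at the extreme coefficients (iterating $a_1 g^{11}_{x^2}=0$, $a_2 g^{11}_{x^2}=0$, \dots\ until a nonzero $a_k$ is reached, and symmetrically for $g^{22}_{x^1}$), and then uses the residual freedom \eqref{eq.x.new} to normalize $g^{11},g^{22}$ to constants $\epsilon_i\in\{-1,0,1\}$. Your factorization $f=p_1^a\tilde f$ with $a$ maximal and restriction to $p_1=0$ is just a compact repackaging of that same leading-order extraction; indeed your ``maximal $a$'' is precisely the index at which the paper's iteration terminates.
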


\begin{proof}
Let $f$ be a polynomial integral of $(M,g)$ with two distinct roots. Then, in view of Proposition \ref{prop.normal.forms.pol}, there exists a system of coordinates where $f$ has the form \eqref{eq.pol.2.distinct.roots.general}. Equivalently, it is the same of considering form \eqref{eq.f} with $a_0=0=a_N$. We shall use this latter notation. If we substitute this $f$ in \eqref{eq.fH} we obtain
$$
a_{1}g^{11}_{x^2}=0\,, \quad a_{N-1}g^{22}_{x^1}=0
$$
as they are, respectively, the coefficients of $p_1^{N+1}$ and $p_2^{N+1}$ of left hand side term of \eqref{eq.fH}. If both $a_{1}$ and $a_{N-1}$ are not zero, then $g^{11}_{x^2}=0=g^{22}_{x^1}$, implying that $g^{11}=g^{11}(x^1)$ and $g^{22}=g^{22}(x^2)$. If $a_{1}=0$, then we obtain $a_{2}g^{11}_{x^2}=0$ as $a_{2}g^{11}_{x^2}$ is the coefficient of $p_1^Np_2$ of the left hand side of \eqref{eq.fH}, that gives either $g^{11}_{x^2}=0$ or $a_{2}=0$. If the latter case occurs, then we obtain $a_{3}g^{11}_{x^2}=0$, and so further. A similar reasoning applies, of course, also if we start from $a_{N-1}g^{22}_{x^1}$. We conclude that if the initial polynomial integral is not zero, then $g^{11}_{x^2}=0=g^{22}_{x^1}$, so that
\begin{equation}\label{eq.g.meno.1}
g^{-1}=g^{11}(x^1)\partial_{x^1}\odot\partial_{x^1}+2g^{12}(x^1,x^2)\partial_{x^1}\odot
\partial_{x^2}+g^{22}(x^2)\partial_{x^2}\odot\partial_{x^2}\,.
\end{equation}
Now, in view of Remark \ref{rem.extra.freedom}, we can use a changing of coordinates \eqref{eq.x.new} to further simplify \eqref{eq.g.meno.1}. In particular, we can find a new system of coordinates such that $g^{11}=\epsilon_1$ and $g^{22}=\epsilon_2$, with $\epsilon_{i}\in\{-1,0,1\}$. Thus, in these new system of coordinates, Hamiltonian $H$ (see \eqref{eq.hamil}) assumes the form \eqref{eq.H.normal.form} (up to renaming function $g^{12}$).
\end{proof}
In our case the metric with upper indices looks precisely like the metric with lower indices in the case of the Chebyshev coordinate net, see formula \eqref{eq.metric.nonso} below. Thus, our coordinate system is conformally equivalent to the Chebyshev net. However, our choice of coordinates is very convenient for our further computations. For instance we shall show in Section \ref{sec.pol.int.3.4} that in this coordinate system all further reductions appear in the most simple way.

\smallskip
To not overload the notation, from now on we shall consider only the
Hamiltonian
\begin{equation}\label{eq.ham.we.considered}
H=\frac{1}{2}p_{1}^{2}+g^{12}p_{1}p_{2}+\frac
{1}{2}p_{2}^{2}\,,
\end{equation}
i.e. \eqref{eq.H.normal.form} with $\epsilon_{1}=\epsilon_{2}=1$, as the other
cases can be treated in the same way. As a possible polynomial integral of the
above Hamiltonian, without loss of generality, in view of
Proposition \ref{prop.ham.and.pol}, we can consider
\begin{equation}
f=a_{1}p_{1}^{N-1}p_{2}+a_{2}p_{1}^{N-2}p_{2}^{2}+...+a_{N-2}p_{1}^{2}p_{2}^{N-2}+a_{N-1}p_{1}p_{2}^{N-1}. \label{Polynomial}
\end{equation}
Then substituting (\ref{Polynomial}) into \eqref{eq.fH} we obtain
\begin{equation}
(p_{1}+g^{12}p_{2})f_{x^1}-p_{1}p_{2}(g^{12})_{x^{1}}f_{p_1}+(g^{12}p_{1}+p_{2})f_{x^2} -p_{1}p_{2}(g^{12})_{x^{2}}f_{p_2}=0 \label{involution}
\end{equation}
from which one can derive a quasi-linear system of first order PDEs
\begin{equation}
\label{raz}\left\{
\begin{array}
[c]{l}
a_{1,x^{1}}+g^{12}a_{1,x^{2}}=a_{1}(g^{12})_{x^{2}}\\
\\
a_{k,x^{1}}+g^{12}a_{k-1,x^{1}}+g^{12}a_{k,x^{2}}+a_{k-1,x^{2}}=ka_{k}
(g^{12})_{x^{2}}+(N+1-k)a_{k-1}(g^{12})_{x^{1}},\text{ \ }k=2,...,N-1\\
\\
g^{12}a_{N-1,x^{1}}+a_{N-1,x^{2}}=a_{N-1}(g^{12})_{x^{1}}
\end{array}
\right.
\end{equation}
with $N$ unknown functions, i.e. the metric coefficient $g^{12}$ and the $N-1$
coefficients $a_{k}$ of polynomial ansatz \eqref{Polynomial}.

A very important property of this system is existence of simple reductions
based on the multiplicity of the roots of polynomial \eqref{Polynomial}. For
instance: $a_{k}=0,k=1,2,...,K_{1}<N-1$ and $k=N-1,N-2,...,K_{2}<N-1$. As an example, if $N=5$, we have the following full list of distinguish reductions: $a_{1}
=0$; $a_{1}=a_{2}=0$; $a_{1}=a_{2}=a_{3}=0$; $a_{1}=a_{4}=0$; $a_{1}
=a_{2}=a_{4}=0$. Thus, instead to investigate each particular case, one
can concentrate on the generic system \eqref{raz} only.

However this system is written in a non-evolutionary form. By this reason, in
Section \ref{sec.integ}, we rewrite it in an evolutionary form by an
appropriate reciprocal transformation.

\subsection{Polynomial integrals of third and fourth degree}\label{sec.pol.int.3.4}

Here we consider system \eqref{raz} in the case of polynomial integrals of third and fourth degree. We also briefly discuss  its reductions according to the multiplicity of roots of the polynomial integral.

\subsubsection{Geodesic flows admitting homogeneous polynomial integrals of
third and fourth degree in momenta}

\label{sec.third}

Let $f$ be a homogeneous polynomial of a third degree in momenta. We already
seen that, if $f$ is a perfect cube, the metric admitting such $f$ as an
integral it admits a Killing vector field (see Proposition \ref{prop.p1.3}).
So, let us then consider the case when polynomial \eqref{eq.pol.iniziale} has
at least two distinct roots. In this case, in view of Proposition
\ref{prop.normal.forms.pol}, a normal form of such polynomial is
\eqref{Polynomial} with $N=3$, i.e.
\begin{equation}
\label{eq.pol.this.case}a_{1}p_{1}^{2}p_{2}+a_{2}p_{1}p_{2}^{2}\,,\quad
a_{1},a_{2}\in C^{\infty}(M).
\end{equation}
that, in view of system \eqref{raz}, is an integral of Hamiltonian
\eqref{eq.ham.we.considered} iff
\begin{equation}
\left\{
\begin{array}
[c]{l}
a_{1,x^{1}}+a_{1,x^{2}}g^{12}=(g^{12})_{x^{2}} a_{1}\\
\\
\displaystyle{a_{1,x^{1}}g^{12}+a_{2,x^{1}}+a_{1,x^{2}} +a_{2,x^{2}}g^{12}
=2(g^{12})_{x^{1}} a_{1}}+2(g^{12})_{x^{2}} a_{2}\\
\\
a_{2,x^{1}}g^{12}+a_{2,x^{2}} =(g^{12})_{x^{1}} a_{2}
\end{array}
\right.  \label{eq.sys.3.degree.general.bis}
\end{equation}
If polynomial $f$ admits two coincident roots, then its normal form is
\eqref{eq.pol.this.case} with $a_{1}=0$ (or, equivalently, $a_{2}=0$) and,
correspondingly, the quasi-linear system to be considered is
\eqref{eq.sys.3.degree.general.bis} with either $a_{1}=0$ or $a_{2}=0$.

\subsubsection{Geodesic flows admitting homogeneous polynomial integrals of
fourth degree in momenta}

Let $f$ be a homogeneous polynomial of fourth degree in momenta. Let us
suppose that $f$ admits at least two distinct roots, otherwise $(M,g)$ admits
a Killing vector field (see again Proposition \ref{prop.p1.3}). In this case,
in view of Proposition \ref{prop.normal.forms.pol}, a normal form of such
polynomial is \eqref{Polynomial} with $N=4$, i.e.
\begin{equation}\label{eq.pol.this.case.fourth}
a_{1}p_{1}^{3}p_{2}+a_{2}p_{1}^{2}p_{2}
^{2}+a_{3}p_{1}p_{2}^{3}\,,\quad a_{1},a_{2},a_{3}\in C^{\infty}(M).
\end{equation}
that, in view of system \eqref{raz}, is an integral of Hamiltonian
\eqref{eq.ham.we.considered} iff
\begin{equation}
\label{eq.4deg.4diff.bis}\left\{
\begin{array}
[c]{l}
a_{1,x^{1}} + a_{1,x^{2}}g^{12} = a_{1} (g^{12})_{x^{2}}\\
\\
a_{1,x^{2}} + a_{2,x^{1}} + a_{1,x^{1}}g^{12} + a_{2,x^{2}}g^{12} = 2 a_{2}
(g^{12})_{x^{2}} + 3 a_{1}(g^{12})_{x^{1}}\\
\\
a_{3,x^{1}} + a_{2,x^{2}} + a_{3,x^{2}}g^{12} + a_{2,x^{1}}g^{12} = 3
a_{3}(g^{12})_{x^{2}} +2 a_{2}(g^{12})_{x^{1}}\\
\\
a_{3,x^{2}} + a_{3,x^{1}}g^{12} = a_{3}(g^{12})_{x^{1}}
\end{array}
\right.
\end{equation}

According to the multiplicity of roots of polynomial
\eqref{eq.pol.this.case.fourth}, we have several possible reductions of system
\eqref{eq.4deg.4diff.bis}. In fact, by arguing as in the end of Section
\ref{sec.third}, if in system \eqref{eq.4deg.4diff.bis} we put $a_{2}
=a_{3}=0$, then we obtain the system describing metrics admitting a polynomial
integral with three coincident roots. If we put $a_{1}=a_{3}=0$, we obtain the
system describing metrics admitting a polynomial integral with two coincident
roots. Finally, if we put $a_{3}=0$, we obtain the system describing metrics
admitting a polynomial integral with two coincident roots (and two distinct).
If no particular assumption is imposed on $a_{1},a_{2},a_{3}$, then,
generically, the above system describes metrics admitting polynomial integrals
with all distinct roots.

\section{Integrability}\label{sec.integ}

In this section we are going to investigate integrability of hydrodynamic-type system \eqref{raz} selected by the Hamiltonian \eqref{eq.ham.we.considered} and the polynomial ansatz \eqref{Polynomial}. For our further research we need first to reduce this system to an evolutionary form. This is possible by finding an appropriate reciprocal transformation; below we present a constructive algorithm:
\begin{itemize}
\item We derive the Liouville equation from commutativity of the Hamiltonian and the first integral \eqref{eq.fH};
\item we introduce an appropriate reciprocal transformation (to semi--geodesic coordinates);
\item under this transformation we recompute the metric, the Hamilton-Jacobi equation, the Liouville equation, momenta and finally the hydrodynamic--type system \eqref{raz};
\item we discuss the existence of infinite set of conservation laws;
\item we prove the diagonalizability of this hydrodynamic--type system;
\item as an example, we shall consider the two component case.
\end{itemize}

\medskip
Polynomial ansatz \eqref{Polynomial} can be written in the form (see more details in
\cite{classmech,MaxTsarMB})
\begin{equation}
f=\left(  p_{1}^{2}+2g^{12}p_{1}p_{2}+p_{2}^{2}\right)  ^{N/2} \lambda(s,x^{1},x^{2}),\label{ff}
\end{equation}
where $s=p_{2}/p_{1}$ and the function $\lambda(s,x^{1},x^{2})$ satisfies \eqref{involution}:
\begin{equation}
(1+g^{12}s)\lambda_{x^1}+(g^{12}+s)\lambda_{x^2}+\left(s^{2}(g^{12})_{x^{1}}-s(g^{12})_{x^{2}}\right)\lambda_{s}  =0.\label{lin}
\end{equation}
Introducing the variable
\begin{equation}\label{ps}
p=(1+2g^{12}s+s^{2})^{-1/2}
\end{equation}
(instead of the variable $s$), equation \eqref{lin} reduces to the canonical form\footnote{Classifications of such Hamiltonian equations is given in \cite{OdPavSok}.}
\begin{equation}\label{lambda}
\lambda_{x^{2}}=\{\lambda,H\}=H_{p}\lambda_{x^{1}}-\lambda_{p}H_{x^{1}
},
\end{equation}
where now the function $\lambda$ depends on $p$ via formula \eqref{ps} and the Hamiltonian function is
\begin{equation}\label{Jacobi}
H=\sqrt{\left(  (g^{12})^{2}-1\right)  p^{2}+1}-g^{12}p.
\end{equation}

Equation (\ref{lambda}) plays an important role in the theory of integrable
hydrodynamic chains and semi-Hamiltonian hydrodynamic-type systems (see, for
instance, \cite{maksgen,algebra}). The existence  of representation
\eqref{lambda} for hydrodynamic-type system \eqref{raz} means that such a system
is integrable (or semi-Hamiltonian), i.e. it admits infinitely
many conservation laws, commuting flows and particular solutions (see more details in \cite{Tsar,tsar91}).
The integrability procedure means that instead of the function $\lambda(x^1,x^2,p)$ we consider the function $\lambda(p,g^{12},a_1(x^1,x^2),\dots,a_{N-1}(x^1,x^2))$ whose existence is equivalent to the integrability of some overdetermined system (now known as the Gibbons-Tsarev system \cite{GibbonsTsarev}). Substitution of one of its particular solutions (see \eqref{Polynomial})
\begin{multline*}
\lambda(s,x^1,x^2)=\left(  p_{1}^{2}+2g^{12}p_{1}p_{2}+p_{2}^{2}\right)^{-N/2} f(x^1,x^2,p_1,p_2)
\\
=(p_{1}^{2}+2g^{12}p_{1}p_{2}+p_{2}^{2})^{-N/2}(a_{1}p_{1}^{N-1}p_{2}+a_{2}p_{1}^{N-2}p_{2}^{2}+...+a_{N-2}p_{1}^{2}p_{2}^{N-2}+a_{N-1}p_{1}p_{2}^{N-1})
\\
=(1+2g^{12}s+s^2)^{-N/2}(a_{1}s+a_{2}s^2+...+a_{N-2}s^{N-2}+a_{N-1}s^{N-1})
\end{multline*}
into \eqref{lambda} creates the hydrodynamic--type system \eqref{raz}. The Liouville--type equation \eqref{lambda} takes the form a Hamilton--Jacobi equation
\begin{equation}\label{eq.px2.hx1}
p_{x^2}=H_{x^1}
\end{equation}
where differentiation of $H$ (given by \eqref{Jacobi}) w.r.t. $x^1$ means to differentiate not only $g^{12}$ but also $p$, that now is a dependent function on $(x^1,x^2,\lambda)$. So, now the function $p$ is a generating function of conservation law densities for hydrodynamic--type system \eqref{raz}, while the Hamilton-Jacobi equation \eqref{eq.px2.hx1} plays the role of the generating equation of the corresponding conservation laws (w.r.t. the parameter $\lambda$).

\subsection{Transformation to the Semi-Geodesic Coordinates}

In this section we are going to consider the main system of our interest \eqref{raz} in another coordinate system that is more convenient for our further computations. Instead of our original coordinates (conformally equivalent Chebyshev coordinates, see the discussion before formula \eqref{eq.ham.we.considered}), we introduce the so called \emph{semi--geodesic} coordinates $(x,y)$  (see more details in \cite{Bialy.Mironov}), which we determine by virtue of the reciprocal transformation
\begin{equation}\label{recip}
dy=\frac{1}{a_{N-1}}dx^{1}-\frac{g^{12}}{a_{N-1}}dx^{2}\,,\quad
dx=dx^{2}\,,
\end{equation}
where the potential function $y$ follows from
the third equation of system \eqref{raz} written in the conservative
form
\[
\left(  \frac{1}{a_{N-1}}\right)_{x^{2}}+\left(  \frac{g^{12}}{a_{N-1}}\right)_{x^{1}}=0\,,
\]
i.e.
$$
y_{x^1}=\frac{1}{a_{N-1}}\,,\quad y_{x^2}=-\frac{g^{12}}{a_{N-1}}\,.
$$
So, the metric corresponding to the Hamiltonian \eqref{eq.ham.we.considered}
\begin{equation}\label{eq.metric.nonso}
ds^{2}=\frac{1}{1-(g^{12})^{2}}\left((dx^{1})^{2}-2g^{12}dx^{1}dx^{2}+(dx^{2})^{2}\right)\,,
\end{equation}
in these semi--geodesic coordinates $(x,y)$, assumes the form
\begin{equation}\label{eq.metric.nonso.2}
ds^{2}=(dx)^{2}+\frac{a_{N-1}^{2}}{1-(g^{12})^{2}}(dy)^{2}\,.
\end{equation}
Correspondingly, the Hamilton--Jacobi equation (see \eqref{eq.px2.hx1})
\begin{equation}\label{eq.px2.bla}
p_{x^{2}}=\left(  \sqrt{((g^{12})^{2}-1)p^{2}+1}-g^{12}p\right)_{x^{1}}
\end{equation}
becomes (see again \cite{Bialy.Mironov} and other details in \cite{MaxTsarMB})
\begin{equation}\label{hj}
\tilde{p}_{y}=\left(  \frac{a_{N-1}}{\sqrt{1-(g^{12})^{2}}}\sqrt{1-\tilde
{p}^{2}}\right)_{x},
\end{equation}
where we define $\tilde{p}$ as follows
\begin{equation}\label{eq.p.tilde}
\tilde{p}=\sqrt{((g^{12})^{2}-1)p^{2}+1}\,
\end{equation}
(we remind that the variable $p$ has been defined by \eqref{ps}). In fact, the conservation law \eqref{eq.px2.bla} can be written in the potential form
$$
d\xi=pdx^1+\left( \sqrt{((g^{12})^{2}-1)p^{2}+1}-g^{12}p \right)dx^2\,.
$$
Under the inverse reciprocal transformation (see \eqref{recip})
\begin{equation}\label{eq.recip.non.so}
dx^1=g^{12}dx+ a_{N-1}dy \,,\quad dx^2=dx
\end{equation}
we obtain
$$
d\xi=pa_{N-1}dy+\sqrt{((g^{12})^2-1)p^2+1}\,dx\,.
$$
The compatibility condition $(\xi_x)_y=(\xi_y)_x$ leads to \eqref{hj} where  $\tilde{p}$ is given by \eqref{eq.p.tilde}.
To recompute first integral \eqref{Polynomial} via new coordinates $(x,y)$ we need first to recompute the corresponding momenta. To do this
we use the identity
\[
p_{1}dx^{1}+p_{2}dx^{2}=\tilde{p}_{1}dx+\tilde{p}_{2}dy\,.
\]
In this case
\begin{equation}\label{eq.p1.p2.tilde}
\tilde{p}_{1}=p_{2}+g^{12}p_{1},\text{ \ }\tilde{p}_{2}=a_{N-1}p_{1}\,,
\end{equation}
then the first integral \eqref{Polynomial} takes the form
\begin{equation}\label{polynom}
f=\tilde{p}_{2}(\tilde{p}_{1})^{N-1}+\tilde{a}_{1}(\tilde{p}_{2})^{2}
(\tilde{p}_{1})^{N-2}+\tilde{a}_{2}(\tilde{p}_{2})^{3}(\tilde{p}_{1}
)^{N-3}+...+\tilde{a}_{N-2}(\tilde{p}_{2})^{N-1}\tilde{p}_{1}+\tilde{a}
_{N-1}(\tilde{p}_{2})^{N}.
\end{equation}

Note that all coefficients $\tilde{a}_{k}$ are linear functions with respect to
$a_{m}$ and polynomial functions with respect to $g^{12}$ and $(a_{N-1})^{-1}$.
Taking into account that first integral \eqref{polynom} can be written in the factorized form
\[
f=(\tilde{p}_{2})^{N}\overset{N-1}{\underset{m=1}{
{\displaystyle\prod}
}}\left(  \frac{\tilde{p}_{1}}{\tilde{p}_{2}}-\tilde{b}_{m}\right)  ,
\]
we introduce (cf. \eqref{ff}) the function
\[
\tilde{\lambda}(\tilde{s},x,y)=\left(  \frac{1}{\tilde{s}^{2}}+\frac{1-(g^{12})^{2}
}{a_{N-1}^{2}}\right)  ^{-N/2}\overset{N-1}{\underset{m=1}{
{\displaystyle\prod}
}}\left(  \frac{1}{\tilde{s}}-\tilde{b}_{m}\right)  ,
\]
where $\tilde{s}=\tilde{p}_{2}/\tilde{p}_{1}$. For our further convenience we define an appropriate independent variable $q$ instead of $\tilde{s}$:
\begin{equation}\label{q.non.so}
q=\frac{a_{N-1}}{\sqrt{1-(g^{12})^{2}}}\frac{1}{\tilde{s}}.
\end{equation}
Then we obtain the equation of the Riemann surface
\begin{equation}\label{lyambda}
\tilde{\lambda}(x,y,q)=a^{-1/2}(1+q^{2})^{-N/2}\overset{N-1}{\underset{m=1}{
{\displaystyle\prod}
}}(q-b_{m}),
\end{equation}
where $\tilde{b}_{k}=a^{1/2}b_{k}$ and
\[
a^{-1/2}=\frac{a_{N-1}}{\sqrt{1-(g^{12})^{2}}}.
\]
Under the reciprocal transformation \eqref{recip} Liouville equation \eqref{lambda} takes the form
\begin{equation}
\tilde{\lambda}_{y}=a^{-1/2}q\tilde{\lambda}_{x}+(1+q^{2})\tilde{\lambda}
_{q}(a^{-1/2})_{x}\label{liouville}
\end{equation}
where (see \eqref{eq.p1.p2.tilde} and \eqref{q.non.so})
\begin{equation}\label{eq.p.q}
\tilde{p}=\frac{q}{1+q^2}\,,\quad q=\frac{\tilde{p}}{1-\tilde{p}^2}.
\end{equation}
The Liouville--type equation \eqref{liouville} takes again the form of  Hamilton--Jacobi equation \eqref{hj} (see \eqref{eq.px2.hx1}), that was already investigated in
\cite{MaxTsarMB}.
Substitution
\eqref{lyambda} into \eqref{liouville} yields the hydrodynamic-type
system\footnote{another hydrodynamic-type system was found in
\cite{Bialy.Mironov}. The approach presented in this Section was established
in \cite{algebra} and utilized in \cite{classmech} for the hydrodynamic-type
system derived by V.V. Kozlov in \cite{kozlov}, where that hydrodynamic-type
system describes classical mechanical systems with one-and-a-half degree of
freedom and with polynomial first integrals.}
\begin{equation}\label{uno}
\left\{
\begin{array}{l}
a_{y}=2a^{1/2}\left(  \overset{N-1}{\underset{m=1}{\sum}}b_{m}\right)
_{x}+a^{-1/2}a_{x}\left(  \overset{N-1}{\underset{m=1}{\sum}}b_{m}\right)
\\
\\
(b_{k})_{y}=a^{-1/2}b_{k}(b_{k})_{x}-[1+(b_{k})^{2}](a^{-1/2})_{x},\text{
\ }k=1,2,...,N-1\,.
\end{array}
\right.
\end{equation}
As we mentioned above, this system is integrable by the Generalized Hodograph Method (see \cite{Tsar,tsar91}). This means that any solution of this system determines an integrable geodesic flow (see details in \cite{MaxTsarMB}). We emphasize that description of integrable geodesic flows selected by Hamiltonian \eqref{eq.ham.we.considered} and by homogeneous polynomial first integral \eqref{Polynomial} reduces to the integrability of the hydrodynamic--type system \eqref{uno}. Moreover, once solutions of this system
are found, one can choose any conservation law density (see
\eqref{eq.p.q}, here instead of $p$ we use $h_k$ and instead of $q$ we use $b_k$)
\[
h_{k}=\frac{b_{k}}{\sqrt{1+(b_{k})^{2}}}
\]
to determine elements of the conservation law (for any index $k$, the density and the flux correspondingly)
\begin{equation*}
g^{12}=h_{k},\text{ \ }a_{N-1}=\sqrt{\frac{1-(h_{k})^{2}}{a}}
\end{equation*}
of the inverse reciprocal transformation \eqref{eq.recip.non.so}.
Since this transformation can be determined for any index $k$, we have $N-1$ different choices how to connect metrics \eqref{eq.metric.nonso} and \eqref{eq.metric.nonso.2}.
Thus any solution found in semi-geodesic coordinates can be recomputed back to
our original coordinates $(x^1,x^2)$. Here we briefly discuss the integrability of system \eqref{uno}. Its integrability is based on two important properties:
existence of Riemann invariants (in these coordinates any hydrodynamic--type system takes a diagonal form) and existence of infinitely many conservation laws. First we explain how to compute these conservation laws. By introducing the so called moments
$$
B^k=\frac{1}{k+1}\sum_{m=1}^N(b^m)^{k+1}\,,
$$
system \eqref{uno} assumes the form
\begin{equation*}
\left\{
\begin{array}{l}
a_{y}=2a^{1/2}
B^0_{x}+a^{-1/2}B^0a_{x}
\\
\\
B^0_y=a^{-1/2}B^1_x - (N+2B^1)(a^{-1/2})_x
\\
\\
B^k_y=a^{-1/2}B^{k+1}_x + (kB^{k-1}+(k+2)B^{k+1})(a^{-1/2})_x\,,\quad k=1,2,\dots
\end{array}
\right.
\end{equation*}
System \eqref{uno} has infinitely many polynomial conservation laws whose densities and fluxes depend only on $a$ and moments $B^k$. One can derive them either iteratively step by step or from the equation of the Riemann surface \eqref{lyambda} (see details below and \cite{classmech,MaxTsarMB}). For instance, first two conservation laws are
$$
a_{y}=\left(2a^{1/2}B^0\right)_x\,, \quad \left(B^0a^{3/2}\right)_y=\left( a\left( \frac{3}{2}(B^0)^2+B^1+\frac{N}{2} \right) \right)_x .
$$
Moreover, system \eqref{uno} is diagonalizable, i.e. it can be put in the form
\begin{equation}\label{Riemann}
r_{t}^{i}=\mu_{i}(\mathbf{r})r_{x}^{i}\,\quad i=1,\dots, N
\end{equation}
where $r^k$ are Riemann invariants, determined by the condition $\tilde{\lambda}_q=0$, i.e., they are
branch points of the Riemann surface determined by the equation \eqref{lyambda}. More precisely, $r^{k}(\mathbf{b})=\tilde{\lambda}(\mathbf{b},q)|_{q=q_{k}
(\mathbf{b})}$, where $\mathbf{b}=(b^1,\dots,b^{N-1})$, i.e., in our case,
the $N$ distinct roots $q_{k}(\mathbf{b})$ determined by the condition
\[
N\frac{q}{1+q^{2}}=\overset{N-1}{\underset{m=1}{\sum}}\frac{1}{q-b_{m}}
\]
and then substituted into the equation of Riemann surface \eqref{lyambda} give the Riemann invariants
\begin{equation*}
r^k(x,y)=a^{-1/2}(1+q_k^{2})^{-N/2}\overset{N-1}{\underset{m=1}{
{\displaystyle\prod}
}}(q_k-b_{m})\,.
\end{equation*}
So, if $q\rightarrow q_{k}(\mathbf{b})$, $\tilde{\lambda
}(\mathbf{b},q)\rightarrow r^{k}(\mathbf{b})$, $\tilde{\lambda}_q\rightarrow 0$, then \eqref{liouville} leads to \eqref{Riemann},
with characteristic velocities $\mu_{k}(\mathbf{r}(a,\mathbf{b}))=a^{-1/2}q_{k}(\mathbf{b})$.
Hydrodynamic--type systems which are simultaneously diagonalizable and possess infinitely many conservation laws are integrable by the Generalized Hodograph Method (see \cite{Tsar,tsar91}). For instance, if $N=2$, then characteristic velocities are
\[
\mu_{1}=a^{-1/2}\left(  b_{1}+\sqrt{(b_{1})^{2}+1}\right)  ,\text{ \ }\mu
_{2}=a^{-1/2}\left(  b_{1}-\sqrt{(b_{1})^{2}+1}\right)  ;
\]
and the corresponding Riemann invariant are
\[
r^{1}=\frac{1}{2}\frac{a^{-1/2}}{b_{1}+\sqrt{(b_{1})^{2}+1}},\text{ \ \ }%
r^{2}=\frac{1}{2}\frac{a^{-1/2}}{b_{1}-\sqrt{(b_{1})^{2}+1}}.
\]
Thus, hydrodynamic-type system \eqref{Riemann} becomes
\[
r_{t}^{1}=-2r^{2}r_{x}^{1},\text{ \ }r_{t}^{2}=-2r^{1}r_{x}^{2}.
\]
This system (as well as the corresponding metric written in semi-geodesic
coordinates) is discussed in detail in \cite{MaxTsarMB}.

\begin{remark}
The first integral investigated in \cite{Bialy.Mironov} and \cite{MaxTsarMB}
is different (cf. \eqref{polynom}):
\[
f=(\tilde{p}_{1})^{N}+\tilde{p}_{2}(\tilde{p}_{1})^{N-1}+\tilde{a}_{1}
(\tilde{p}_{2})^{2}(\tilde{p}_{1})^{N-2}+\tilde{a}_{2}(\tilde{p}_{2})^{3}(\tilde{p}_{1})^{N-3}+...+\tilde{a}_{N-2}(\tilde{p}_{2})^{N-1}\tilde
{p}_{1}+\tilde{a}_{N-1}(\tilde{p}_{2})^{N}.
\]
However the integration procedure based on the Generalized Hodograph Method
(see more details in \cite{Tsar,tsar91}) is precisely the same as in
\cite{MaxTsarMB}.
\end{remark}

\section{Conclusion}

In this paper we presented an approach, described in details in Section \ref{sec.1}, that allows to reduce the problem of the description of integrable geodesic flows selected by homogeneous polynomial first integrals to the integrability of semi--Hamiltonian hydrodynamic--type systems possessing a variety of inequivalent reductions. In fact, the polynomial ansatz \eqref{Polynomial} for the first integral depends on two natural numbers: one of them is the degree of the polynomial and the other is the number of its non-zero coefficients.
Moreover, the considered hydrodynamic--type systems can be written in evolutionary form
by an appropriate reciprocal transformation to semi-geodesic coordinates. All of them can be integrated by the Generalized Hodograph Method. However, even in the two components cases, such solutions can be presented just in implicit form. Nevertheless, this two component case is very interesting since characteristic velocities can be expressed explicitly via Riemann invariants. This particular research will be the topic of a future investigation.

\section*{Acknowledgements}

The authors thank Andrey Mironov and Sergey Tsarev for important discussions.
Both authors were partially supported by the grant ``Finanziamento giovani
studiosi - Metriche proiettivamente equivalenti, equazioni di Monge--Amp\`ere
e sistemi integrabili'', University of Padova 2013-2015 and by the project ``FIR (Futuro in Ricerca) 2013 -
Geometria delle equazioni differenziali''. The second author was partially
supported by the grant of Presidium of RAS \textquotedblleft Fundamental
Problems of Nonlinear Dynamics\textquotedblright\ and by the RFBR grant
15-01-01671-a. The first author is member of G.N.S.A.G.A. of I.N.d.A.M.

\bibliographystyle{elsarticle-num}
\bibliography{GianniBib}

\end{document}